\newtheorem{theorem}{Theorem}[section]
\newtheorem{lemma}[theorem]{Lemma}
\title{\LARGE{\textbf{{ Playing a game of billiard with Fibonacci}}}}
\author{Daniel Jaud}
\date{}
\begin{document}
\maketitle
\flushbottom

\hrulefill\\ 

\begin{abstract}
By making use of the greatest common divisor's ($gcd$) properties we can highlight some connections between playing billiard inside a unit square and the Fibonacci sequence as well as the Euclidean algorithm. In particular by defining two maps $\tau$ and $\sigma$ corresponding to translations and mirroring we are able to rederive Lam\'{e}'s theorem and to equip it with a geometric interpretation realizing a new way to construct the golden ratio. Further we discuss distributions of the numbers $p,q\in \mathbb{N}$ with $gcd(q,p)=1$ and show that these also relate to the Fibonacci sequence.
\end{abstract}

\vspace*{0.7cm}
\hrulefill\\ 
\tableofcontents
\newpage

\section{Introduction and setup}\label{sec:Introduction}
The mathematical description of playing pool inside a square has been known for some time. In particular it is known that rational multiples of the angle $\pi$ lead to closed orbits. In this paper we  consider the set-up of a quadratic pool table of unit length\footnote{Quadratic tables of different size can simply be obtained by rescaling the $x-$ and $y-$directions by the same factor $L$, where $L$ is the length of a side.} with the four edges labelled $A,~B,~C$ and $D$ (see figure \ref{fig:setup}).

\begin{figure}[htb]
\centering
\begin{tikzpicture}
\draw[ultra thick] (0,0)--(2,0)--(2,2)--(0,2)--(0,0);
\node[left] at (0,0) {$A$};
\node[left] at (0,2) {$D$};
\node[right] at (2,0) {$B$};
\node[right] at (2,2) {$C$};

\end{tikzpicture}
\caption{Quadratic billiard table with four corners labelled as $A,~B,~C$ and $D$. \label{fig:setup}}
\end{figure}
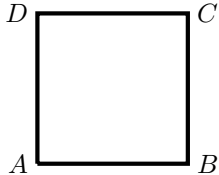

Assuming that our billiard ball is initially placed in corner $A$, we are interested in straight paths to another corner, where we allow the ball to scatter elastically when hitting one of the four boundaries. In our experiment we only consider the case of ideal reflection on boundaries, i.e. no loss of energy due to friction, spinning etc.. By the well known unfolding procedure \cite{billiard1,billiard2,Tabachnikov}(see exemplary figure \ref{fig:unfolding}) it  can easily be shown that (in the unfolded picture) the ball's trajectories correspond to straight lines, where the condition of hitting another corner is in an one to one correspondence for the slope $m$ to be given as a rationale number

\begin{equation}
m=\tan(\alpha)=\frac{q}{p}~~~\mbox{with}~~~p,q\in \mathbb{N}.
\end{equation}

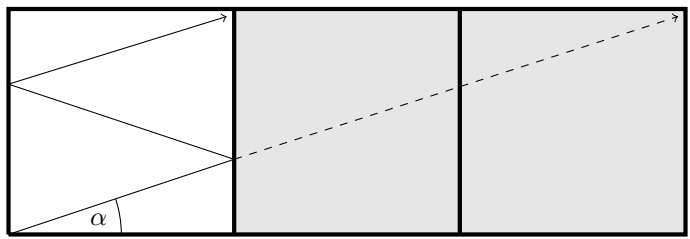
\begin{figure}[htb]
\centering
\begin{tikzpicture}
\fill[gray!20!white] (0,0)--(9,0)--(9,3)--(0,3)--(0,0);
\fill[white] (0,0)--(3,0)--(3,3)--(0,3)--(0,0);
\draw[ultra thick] (0,0)--(9,0)--(9,3)--(0,3)--(0,0);
\foreach \x in {3,6} \draw[ultra thick] (\x,0)--(\x,3);
\draw[->] (0,0)--(3,1)--(0,2)--(2.9,2.9);
\draw[dashed,->] (3,1)--(8.9,2.9);
\draw (1.5,0) arc (0:18.3:1.5) node[above] at (1.2,0) {$\alpha$};
\end{tikzpicture}
\caption{Unfolding of the original square trajectories for the ball which corresponds to straight lines. \label{fig:unfolding}}
\end{figure}

In particular $(p|q)$ can be identified with the point on the grid that is obtained by the unfolding procedure. As soon as the ball hits one of the corners the law of reflection is no longer applicable and we demand the ball to remain in this end position.

When this is the case there is one further restriction on the numbers $p$ and $q$, namely they additionally have to fulfil 

\begin{equation} \label{eq:gcd1}
gcd(p,q)=1.
\end{equation}

From a physical point of view equation \eqref{eq:gcd1} is necessary since if $gcd(q,p)=k$, with $k\in \mathbb{N}/\{1\}$, it holds that
\begin{equation}
m=\frac{q}{p}=\frac{k\cdot \tilde{q}}{k\cdot \tilde{p}}=\frac{\tilde{q}}{\tilde{p}}.
\end{equation}

This tells us that smaller numbers $(\tilde{p},\tilde{q})$ exist, leading exactly to the same slope, i.e. a ball's trajectory with smaller length than the trajectory corresponding to $(p,q)$. In particular, the ball will lie at rest in one of the corners provided we pick $(\tilde{p},\tilde{q})$.

\section{The $gcd$, maps and Fibonacci numbers}
The greatest common divisor $gcd$ of two numbers inhabits several properties. Here we will only focus on the following two

\begin{align}
gcd(a,b)&=gcd(b,a)~~~(asscociativity),\\
gcd(a,b)&=gcd(a,b~\mbox{mod}~a)~~~\mbox{for}~a>0.
\end{align}

In the last section we have seen that the numbers $p$ and $q$ for the initial slope of the ball's trajectory have also to fulfil the identity $gcd(p,q)=1$. Therefore, we can interpret the properties of the $gcd$ as maps acted on the slope $m=q/p$, leading to the definition of the two maps

\begin{align}
\sigma &:\frac{q}{p} \longmapsto \frac{p}{q} ~~~~~~~~ \leftrightarrow ~~~gcd(q,p)=gcd(p,q),\\
\tau &:\frac{q}{p} \longmapsto \frac{q}{p+q} ~~~ \leftrightarrow ~~~gcd(q,p)=gcd(q,p+q).
\end{align}

The action of $\sigma$ corresponds to a reflection of the straight line (point) in the unfolded picture along the diagonal with $m=1$, whereas $\tau$ corresponds to a parallel shift of the line's endpoints by $q$ in the $x-$direction (see figure \ref{fig:actions}).

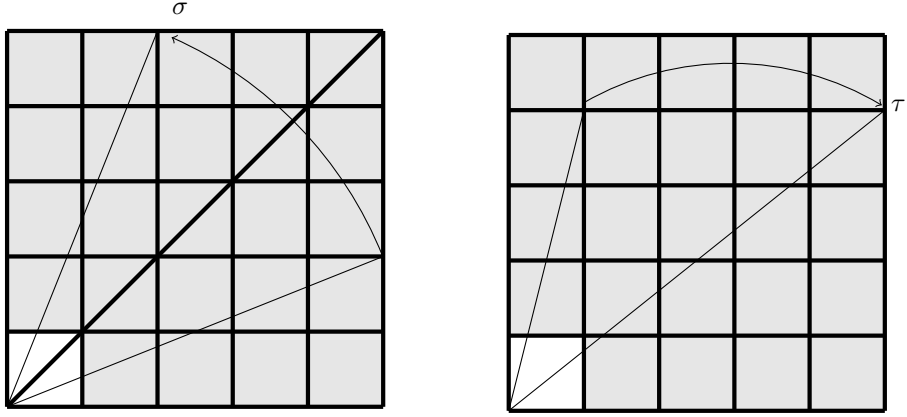
\begin{figure}[htb]
\centering
\begin{minipage}{0.45\textwidth}
\begin{tikzpicture}[scale=1]
\fill[gray!20!white] (0,0)--(5,0)--(5,5)--(0,5)--(0,0);
\fill[white] (0,0)--(1,0)--(1,1)--(0,1)--(0,0);

\foreach \x in {0,1,2,3,4,5} \draw[ultra thick] (\x,0)--(\x,5);
\foreach \x in {0,1,2,3,4,5} \draw[ultra thick] (0,\x)--(5,\x);
\draw[ultra thick] (0,0)--(5,5);
\draw (0,0)--(5,2);
\draw (0,0)--(2,5);
\draw[->] (21.8:5.38) arc (21.8:66:5.38) node[above] at (2.3,5.1) {$\sigma$};
\end{tikzpicture}
\end{minipage}
\hfill
\begin{minipage}{0.45\textwidth}
\begin{tikzpicture}[scale=1]
\fill[gray!20!white] (0,0)--(5,0)--(5,5)--(0,5)--(0,0);
\fill[white] (0,0)--(1,0)--(1,1)--(0,1)--(0,0);

\foreach \x in {0,1,2,3,4,5} \draw[ultra thick] (\x,0)--(\x,5);
\foreach \x in {0,1,2,3,4,5} \draw[ultra thick] (0,\x)--(5,\x);
\draw (0,0)--(1,4);
\draw (0,0)--(5,4);
\draw[->] (1,4.1) arc (120:59:3.9) node[right] {$\tau$};
\fill[white] (2,5.5) circle (0.1);
\end{tikzpicture}
\end{minipage}
\caption{Action of $\sigma$ and $\tau$ on a given straight line leading to an allowed corner in the unfolded picture. \label{fig:actions}}
\end{figure}

In particular, when starting with the slope $m=\frac{1}{1}$ all other allowed slopes can be obtained by multiple actions of $\sigma$ and $\tau$, e.g.

\begin{equation}
\frac{3}{5}=\frac{3}{2+3}=\tau \left(\frac{3}{2}\right) =\tau \sigma \left(\frac{2}{3}\right) =\tau \sigma \left(\frac{2}{1+2}\right)= \dots = \underbrace{\tau \sigma \tau \sigma \tau}_{\mbox{word}} \left( \frac{1}{1}\right).
\end{equation}

To each slope $m=\frac{q}{p}$ corresponds a unique word formed by $\tau$ and $\sigma$. The general form of these words is given by

\begin{equation}
\boxed{
\sigma^j \tau^{n_1}\sigma \tau^{n_2} \cdots \sigma \tau^{n_k}}~~, \label{eq:word}
\end{equation}

where $j=1$  if $q>p$ and $j=0$ for $q<p$. Hereby, the uniqueness directly follows from the non-commutativity of $\tau$ and $\sigma$, i.e.

\begin{equation}
[\tau,\sigma]\not=0.
\end{equation}



\begin{theorem}
For the number $N_\sigma (n)$ of $\sigma$'s within a word of length $n$ holds $N_\sigma(n)\leq \lfloor \frac{n}{2} \rfloor $.
\end{theorem}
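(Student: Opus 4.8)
\textit{Proof proposal.} The plan is to exploit the rigid block structure of the canonical word in \eqref{eq:word}. The first thing I would pin down is that in that form every $\tau$-exponent satisfies $n_i\ge 1$, and consequently the word contains no occurrence of $\sigma\sigma$ and never ends in the letter $\sigma$. This is really a statement about the (reversed) process that produces the word from a slope $q/p$ with $\gcd(p,q)=1$: one alternately removes a block $\tau^{-1}$ a positive number of times --- the relevant Euclidean quotient, which is at least $1$ --- and then applies $\sigma$ to interchange numerator and denominator; a second $\sigma$ can never follow immediately because right after the swap the fraction has strictly smaller numerator than denominator (unless it is already $1/1$, where the process stops), so the next operation is forced to be a $\tau^{-1}$. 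Likewise the process terminates by bringing $1/p$ down to $1/1$ through $\tau^{-1}$'s, so read forwards the word ends with a non-empty block of $\tau$'s.

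Granting this, I would argue by a simple injection. Write $s:=N_\sigma(n)$, let $N_\tau(n)$ denote the number of $\tau$'s in the word, and list the positions of the $\sigma$'s as $i_1<\dots<i_s$. Because the word does not end in $\sigma$, each $i_\ell$ satisfies $i_\ell<n$, so the letter at position $i_\ell+1$ exists; because no $\sigma\sigma$ occurs, that letter is a $\tau$. The assignment ($\sigma$ at position $i_\ell$) $\mapsto$ ($\tau$ at position $i_\ell+1$) is injective: two consecutive $\sigma$-positions are non-adjacent, $i_{\ell+1}\ge i_\ell+2$, so the images $i_\ell+1$ and $i_{\ell+1}+1$ are distinct. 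Hence $N_\sigma(n)\le N_\tau(n)$, and since $N_\sigma(n)+N_\tau(n)=n$ this gives $2N_\sigma(n)\le n$; as $N_\sigma(n)\in\mathbb N$ it follows that $N_\sigma(n)\le\lfloor n/2\rfloor$.

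Alternatively, the same bound drops out by bookkeeping directly from \eqref{eq:word}: with $j\in\{0,1\}$ and each $n_i\ge1$ one has $N_\sigma(n)=j+(k-1)$ and $n=j+(k-1)+\sum_{i=1}^k n_i\ge 2k+j-1$, whence $N_\sigma(n)=j+k-1\le\frac{n+j-1}{2}\le\frac n2$, and integrality gives the floor; the trivial case $n=0$ (empty word, slope $1/1$) is immediate. I expect the only genuinely delicate point to be the content of the first paragraph --- justifying that the canonical form has all $n_i\ge 1$, hence no repeated $\sigma$ and no terminal $\sigma$. Once that normalization is in hand, the counting is routine.
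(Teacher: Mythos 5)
Your proof is correct and takes essentially the same route as the paper: both arguments rest on the canonical form \eqref{eq:word} with every exponent $n_i\ge 1$, so that each $\sigma$ is matched with a following $\tau$, giving $N_\sigma(n)\le N_\tau(n)$ and hence $2N_\sigma(n)\le n$. Your closing bookkeeping is in fact a more explicit version of the paper's own count (the paper simply notes that $N_\sigma$ is maximized when all $n_i=1$ and then checks the parity of $n$), and your justification that all $n_i\ge 1$ makes precise a point the paper leaves implicit.
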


\begin{proof}
The most general form of a word is given by equation \eqref{eq:word}. If the length $n$ of the word, i.e. the total number of $\tau$'s and $\sigma$'s, is fixed, the number $N_\sigma(n)$ of $\sigma$'s contained in the word becomes maximal if $n_i=1 ~\forall i $. If $n$ is even, there are as many $\tau$'s as there are $\sigma$'s, i.e. in this case $N_\sigma(n)=\frac{n}{2}$. If $n$ is odd, there always has to be one more $\tau$ compared to the number of $\sigma$'s, i.e. then holds

\begin{align}
N_\sigma(n)+\underbrace{N_\sigma(n)+1}_{=N_\tau(n)}&=n\\
\leftrightarrow N_\sigma(n)&=\frac{n-1}{2}.
\end{align}

Combining both results we find the following

\begin{equation}
N_\sigma(n)\leq \lfloor \frac{n}{2} \rfloor .
\end{equation}
\end{proof}

Next, we want to highlight the connection between the words \eqref{eq:word} and the Euclidean algorithm \cite{numbertheory,Moore87}. In order to do so we state:

\begin{lemma}\label{lemma:1step}
If $p=n_1\cdot q +r_1$, with $n_1\in \mathbb{N}$ and $r_1$ the rest of the division of $p$ with $q$ (assuming $p>q$), then it holds
\begin{equation}
\frac{q}{p}=\tau^{n_1}\sigma \left(\frac{r_1}{q}\right).
\end{equation}
\end{lemma}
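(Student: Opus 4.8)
The plan is to unwind the definitions of $\tau$ and $\sigma$ and verify the identity by direct computation, reading the word from right to left as a composition of maps. First I would start from the innermost expression $\frac{r_1}{q}$ and apply $\sigma$, which by definition swaps numerator and denominator, giving $\sigma\left(\frac{r_1}{q}\right)=\frac{q}{r_1}$. Then I would apply $\tau$ repeatedly: since $\tau:\frac{q}{p}\mapsto\frac{q}{p+q}$ leaves the numerator fixed and adds the numerator to the denominator, one checks by an easy induction on $m$ that $\tau^{m}\left(\frac{q}{r_1}\right)=\frac{q}{r_1+m\cdot q}$. Setting $m=n_1$ yields $\tau^{n_1}\sigma\left(\frac{r_1}{q}\right)=\frac{q}{r_1+n_1 q}=\frac{q}{p}$, where the last equality is exactly the division identity $p=n_1 q+r_1$ that is hypothesized.

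The only genuine content beyond bookkeeping is the induction establishing $\tau^{m}\left(\frac{q}{r_1}\right)=\frac{q}{r_1+mq}$, and even that is immediate: the base case $m=0$ is trivial, and the inductive step applies the definition of $\tau$ once more, replacing the denominator $r_1+mq$ by $(r_1+mq)+q=r_1+(m+1)q$. I would also remark briefly that the hypotheses $p>q$ and $0\le r_1<q$ guarantee $n_1\ge 1$, so the word $\tau^{n_1}\sigma$ is non-trivial and the statement is the genuine first step of the Euclidean algorithm rendered in the $\tau,\sigma$ language; strictly speaking this observation is not needed for the algebraic identity itself.

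I do not anticipate a real obstacle here: the lemma is essentially a restatement of the Euclidean division step $p\mapsto(q,r_1)$ under the correspondence $\frac{q}{p}\leftrightarrow$ word, and the proof is a two-line verification. If anything, the only thing to be careful about is the direction in which the word is composed — the rightmost symbol acts first — so that $\tau^{n_1}\sigma\left(\frac{r_1}{q}\right)$ means apply $\sigma$ first and then $\tau$ a total of $n_1$ times; getting this convention straight is what makes the computation come out as claimed rather than with $p$ and $q$ interchanged.
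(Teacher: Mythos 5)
Your computation is exactly the argument the paper intends: it states that the lemma ``follows by direct application of the definitions of $\tau$ and $\sigma$'' and omits the details, which you have correctly supplied ($\sigma$ inverts to $\frac{q}{r_1}$, then $\tau^{n_1}$ adds $n_1 q$ to the denominator, giving $\frac{q}{r_1+n_1 q}=\frac{q}{p}$). The proposal is correct and takes the same route, just written out in full.
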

The proof follows by direct application of the definitions of $\tau$ and $\sigma$ and is consequently clear. With this we can show:
\begin{theorem}
The word corresponding to $\frac{q}{p}$, with $q>p$, realizes the Euclidean algorithm. Furthermore, the number $N_\sigma (n)$ of $\sigma$'s within a word of length $n$ corresponds to the number of steps performed in the Euclidean algorithm.
\end{theorem}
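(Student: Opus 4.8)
The plan is to iterate Lemma~\ref{lemma:1step} and observe that each application reproduces exactly one division step of the Euclidean algorithm. Starting from $\frac{q}{p}$ with $q>p$, I would first apply $\sigma$ to obtain $\frac{p}{q}$ with $p>q$ (this accounts for the leading $\sigma^j$ with $j=1$ in the word \eqref{eq:word}); this $\sigma$ is the one that will turn out to correspond to the first division $q = n_1 p + r_1$. Then Lemma~\ref{lemma:1step}, applied with the roles of $p$ and $q$ suitably renamed, gives $\frac{p}{q} = \tau^{n_1}\sigma\left(\frac{r_1}{p}\right)$, where $q = n_1 p + r_1$ is precisely the first line of the Euclidean algorithm with $0\le r_1 < p$.

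Next I would set up the induction. Write the Euclidean algorithm for the pair $(q,p)$ as the chain $q = n_1 p + r_1$, $p = n_2 r_1 + r_2$, $r_1 = n_3 r_2 + r_3$, and so on, terminating when some remainder vanishes; since $\gcd(q,p)=1$ the last nonzero remainder is $1$. The claim is that after the leading $\sigma$, repeated use of Lemma~\ref{lemma:1step} peels off one block $\tau^{n_i}\sigma$ per division step, so that $\frac{q}{p} = \sigma\,\tau^{n_1}\sigma\,\tau^{n_2}\sigma\cdots$ and the recursion bottoms out at $\frac{1}{1}$ when the algorithm terminates. Formally I would induct on the number of steps in the Euclidean algorithm: the base case is a pair already of the form $\frac{1}{k}$, handled directly since $\frac{1}{k}=\tau^{k-1}\left(\frac{1}{1}\right)$; the inductive step is a single invocation of Lemma~\ref{lemma:1step} to strip the first block and reduce to the strictly shorter algorithm for $(p,r_1)$. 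Counting the $\sigma$'s: each block $\tau^{n_i}\sigma$ contributes exactly one $\sigma$, and there is one block per division step, so $N_\sigma(n)$ equals the number of steps in the Euclidean algorithm, which establishes the second assertion.

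The routine bookkeeping I would not belabor is checking the boundary conventions: whether the terminal step of the algorithm (the one producing remainder $0$, or equivalently reaching $\gcd = 1$) gets counted as a $\sigma$ or is absorbed into the final $\tau^{n_k}$, and correspondingly whether the word ends in $\sigma$ or in a pure power of $\tau$. Comparing with the canonical form \eqref{eq:word}, which ends in $\tau^{n_k}$, the count should be that $N_\sigma$ equals the number of \emph{genuine} division steps, i.e. all steps except the trivial final reduction $\frac{1}{n_k}\mapsto\frac{1}{1}$; I would fix the indexing so this matches and note that it is consistent with the earlier bound $N_\sigma(n)\le\lfloor n/2\rfloor$.

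The main obstacle I anticipate is not any single hard estimate but rather making the correspondence \emph{bijective and canonical} rather than merely \emph{possible}: one must argue that iterating Lemma~\ref{lemma:1step} produces exactly the word \eqref{eq:word} associated to $\frac{q}{p}$ (using the uniqueness already noted from $[\tau,\sigma]\neq 0$), and that the exponents $n_i$ that appear are forced to be the Euclidean quotients — in particular that $\tau$ cannot be applied "too many" times because $\tau^{n}\left(\frac{r}{q}\right)=\frac{r}{q+nr}$ stays a legal slope only up to the point where the next $\sigma$ must intervene. Pinning down that the $n_i$ read off from the word coincide with the quotients, and that no shorter word represents the same slope, is where the care is needed; everything else is a direct unwinding of the definitions of $\tau$ and $\sigma$.
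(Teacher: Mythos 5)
Your proposal is correct and follows essentially the same route as the paper: iterate Lemma~\ref{lemma:1step} so that each application peels off one block $\tau^{n_i}\sigma$ corresponding to one division step of the Euclidean algorithm, and then count the $\sigma$'s. You are in fact somewhat more careful than the paper's own terse proof, which omits the leading $\sigma$ for $q>p$ and the terminal-step bookkeeping and simply asserts $N_\sigma=k-1$ (modulo the slip where you write ``with $p>q$'' after applying $\sigma$ --- you clearly mean that the denominator now exceeds the numerator, as the lemma requires).
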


\begin{proof}
Considering lemma \ref{lemma:1step} it represents the first step in the Euclidean algorithm (see e.g. \cite{numbertheory}). The additional slope $r_1/q$ can equally be rewritten via the second step in the Euclidean algorithm as
\begin{equation}
q=n_2\cdot r_1 + r_2 ~~ \leftrightarrow ~~ \tau^{n_2}\sigma \left(\frac{r_2}{r_1}\right).
\end{equation}

Following this procedure we end up with
\begin{equation}
\frac{q}{p}=\tau^{n_1}\sigma \tau^{n_2} \cdots \sigma \tau^{n_k}\left(\frac{1}{1}\right),
\end{equation}
and see that the number of $\sigma$'s within the word (here $N_\sigma=k-1$) counts the number of steps performed in the Euclidean algorithm.
\end{proof}

It is important to understand that only the number of $\sigma$'s inside the word, i.e. enclosed by $\tau$'s, are in one to one correspondence to the Euclidean algorithm since a single $\sigma$ which might be on the very left of the word simply corresponds to the inverse slope. 

\newpage

\section{Distribution of words with minimal/maximal number of $N_\sigma$}

In this section we want to focus on words with $N_\sigma(n)=1$, i.e. words that represent only one necessary step in the Euclidean algorithm, and words in which $N_\sigma(n)$ is maximal. We are going to start with $N_\sigma(n)=1$, i.e. the most general word that can be built takes the form

\begin{equation}
\tau^{n_1}\sigma \tau^{n_2},  \label{eq:word2}
\end{equation}

where $n_1+n_2+1=n$, $n_1\in \mathbb{N}_0$ and $n_2 \in \mathbb{N}$. Thus it holds

\begin{equation}
\tau^{n_1}\sigma \tau^{n_2} \left(\frac{1}{1}\right)=\frac{n_2+1}{n_1(n_2+1)+1}=\frac{q}{p}~~~~\rightarrow ~~~~\frac{q}{p}=\tau^{\frac{p-1}{q}}\sigma \tau^{q-1} \left(\frac{1}{1}\right).
\end{equation}

By graphing the allowed points $(p|q)$ it follows by construction (see figure \ref{fig:minimal}) that they can be connected by straight lines starting from the point $(1|0)$ on the grid.

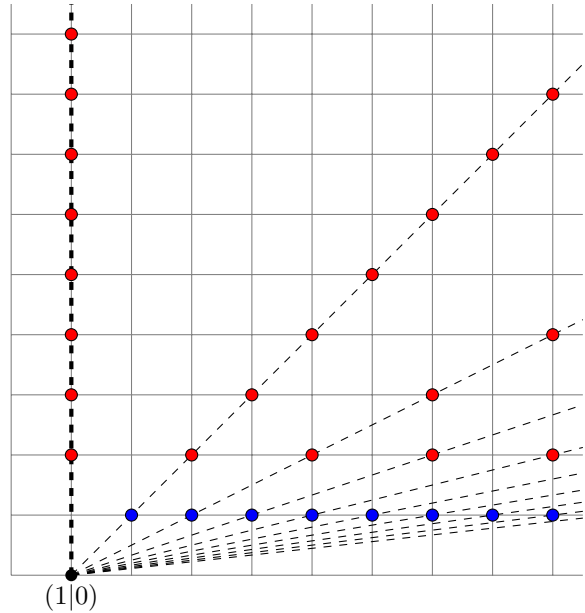
\begin{figure}[htb]
\centering
\begin{tikzpicture}[scale=0.8]
\draw[gray] (0,0) grid (9.5,9.5);
\draw[black,ultra thick, dashed] (1,0)--(1,9.5);
\draw[black,dashed] (1,0)--(9.5,8.5);
\draw[black,dashed] (1,0)--(9.5,4.25);
\draw[black,dashed] (1,0)--(9.5,2.8333);
\draw[black,dashed] (1,0)--(9.5,2.125);
\draw[black,dashed] (1,0)--(9.5,1.7);
\draw[black,dashed] (1,0)--(9.5,1.4166);
\draw[black,dashed] (1,0)--(9.5,1.214);
\draw[black,dashed] (1,0)--(9.5,1.0625);
\draw[black,dashed] (1,0)--(9.5,0.95);

\foreach \x in {2,3,4,5,6,7,8,9} \fill[blue,draw=black] (\x,1) circle (0.1); 
\foreach \x in {3,5,7,9} \fill[red,draw=black] (\x,2) circle (0.1) ; 
\foreach \x in {4,7} \fill[red,draw=black] (\x,3) circle (0.1);
\foreach \x in {5,9} \fill[red,draw=black] (\x,4) circle (0.1);
\foreach \x in {2,3,4,5,6,7,8,9} \fill[red,draw=black] (1,\x) circle (0.1);
\fill[red,draw=black] (6,5) circle (0.1);
\fill[red,draw=black] (7,6) circle (0.1);
\fill[red,draw=black] (8,7) circle (0.1);
\fill[red,draw=black] (9,8) circle (0.1);

\fill[black] (1,0) circle (0.1) node[below] {$(1|0)$};
\end{tikzpicture}
\caption{Blue points correspond to the words $\tau^{n_2}$. Red points correspond to words of the form $\tau^{n_1}\sigma \tau^{n_2}$ forming straight lines of slope $\tilde{m}=\frac{1}{n_1}$ with respect to the starting point $(1|0)$. \label{fig:minimal}}
\end{figure}

For the slope $\tilde{m}$ of these lines with respect to the point $(1|0)$ holds

\begin{equation}
\tilde{m}=\frac{1}{n_1}.
\end{equation}

In particular, we immediately know that when drawing a straight line with the starting point $(1|0)$ and slope $\tilde{m}=\frac{1}{n_1}$, where $n_1\in \mathbb{N}$, all the intersection points of the line and the grid represent the coordinates of two numbers $p$ and $q$ with $gcd(p,q)=1$. Further words with $N_\sigma >1$ that minimally differ from the words defined in \eqref{eq:word2} will accumulated near to one of the straight lines of slope $\tilde{m}$ leading to a distribution of points $p$ and $q$ with $gcd(p,q)=1$ with respect to the steps performed in the Euclidean algorithm.

Another example considers the special word $fib$ with $N_\tau(2)=N_\sigma(2)=1$ where the numbers of $\sigma$ and $\tau$ are equally distributed. The word thus takes the form

\begin{equation}
fib:=\sigma \tau.
\end{equation}

We have named the word $fib$ because when it acts on the slope $m=\frac{1}{1}$ it is deeply connected to Fibonacci numbers as stated in the following theorem.

\begin{theorem}\label{fibk}
Acting with $fib^k:=\overbrace{\sigma \tau \cdots \sigma \tau}^{k-times}$ on $m=\frac{1}{1}$ yields
\begin{equation}
fib^k\left(\frac{1}{1}\right)=\frac{F_{k+1}}{F_{k}}.
\end{equation}
Here $F_k$ is the $k-$th Fibonacci number with $F_0=F_1=1$.
\end{theorem}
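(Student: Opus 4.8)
The plan is to prove this by induction on $k$, tracking the effect of one application of $fib = \sigma\tau$ on a ratio of consecutive Fibonacci numbers. First I would record the recursion $F_{k+1} = F_k + F_{k-1}$ together with the stated initial conditions $F_0 = F_1 = 1$, and note the base case: for $k=1$ we have $fib(1/1) = \sigma\tau(1/1) = \sigma(1/2) = 2/1 = F_2/F_1$, which matches. (A quick check of $k=2$, giving $3/2 = F_3/F_2$, is reassuring but not logically needed.)

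For the inductive step, I would assume $fib^k(1/1) = F_{k+1}/F_k$ and apply one more copy of $fib$ on the left, so that $fib^{k+1}(1/1) = \sigma\tau\left(F_{k+1}/F_k\right)$. Here the key is to apply the definitions of the maps directly: $\tau$ sends $q/p \mapsto q/(p+q)$, so $\tau\left(F_{k+1}/F_k\right) = F_{k+1}/(F_k + F_{k+1})$, and by the Fibonacci recursion the denominator is exactly $F_{k+2}$. Then $\sigma$ swaps numerator and denominator, yielding $F_{k+2}/F_{k+1}$, which is the claimed formula with $k$ replaced by $k+1$. This closes the induction.

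The only subtlety — and really the main thing to get right rather than a genuine obstacle — is the indexing convention $F_0 = F_1 = 1$, which is shifted relative to the more common $F_1 = F_2 = 1$ convention; one must make sure the base case is aligned with it so that the denominator $F_k + F_{k+1}$ collapses to $F_{k+2}$ at every step. A second point worth stating explicitly is that $fib^k$ is defined as the word $\sigma\tau\cdots\sigma\tau$ read with the leftmost $\sigma\tau$ applied \emph{last}, i.e. $fib^{k+1} = fib \circ fib^k$, so that the inductive step legitimately peels off one $\sigma\tau$ from the outside; this is consistent with the composition order used throughout the earlier examples in the paper. With these conventions pinned down, the computation is entirely mechanical and the proof is short.
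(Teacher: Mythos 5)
Your proof is correct and follows essentially the same route as the paper's: induction on $k$, with the base case $fib(1/1)=2/1=F_2/F_1$ and the inductive step peeling off one $\sigma\tau$ from the outside so that $\sigma\tau\left(F_{k+1}/F_k\right)=F_{k+2}/F_{k+1}$ via the Fibonacci recursion. Your explicit remarks about the indexing convention $F_0=F_1=1$ and the composition order $fib^{k+1}=fib\circ fib^k$ are sensible clarifications of points the paper leaves implicit, but they do not change the argument.
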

\begin{proof}
We proof the theorem by total induction. For $k=1$ we have
\begin{equation}
fib^1\left(\frac{1}{1}\right)=\sigma\tau \left(\frac{1}{1}\right)=\frac{2}{1}=\frac{F_2}{F_1}.
\end{equation}
Now for $k\rightarrow k+1$ we find
\begin{equation}
fib^{k+1}\left(\frac{1}{1}\right)= fib \circ  fib^k \left(\frac{1}{1}\right) =\sigma \tau  \left(\frac{F_{k+1}}{F_{k}}\right)=\frac{F_{k}+F_{k+1}}{F_{k+1}}=\frac{F_{k+2}}{F_{k+1}}.
\end{equation}
\end{proof}
Note that the number of divisions in the Euclidean algorithm for $fib^k$ in theorem \ref{fibk} is exactly $N_\sigma=k$. Further, it is immediatelly followes by construction that $gcd(F_k,F_{k+1})=1$. We realize that this is nothing else than Lam\'{e}'s theorem in the formulation of D. Knuth \cite{Knuth}, though it is obtained by a different approach.
 
A direct consequence of theorem \ref{fibk} yields the following result
\begin{theorem}
In the limit of large word length $n$ it holds
\begin{equation}
\lim_{k\rightarrow \infty} fib^k \left(\frac{1}{1}\right) = \varphi, \label{eq:goldenration}
\end{equation}

where $\varphi=\frac{1+\sqrt{5}}{2}$ is the golden ratio.
\end{theorem}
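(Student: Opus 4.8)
The plan is to reduce everything to Theorem \ref{fibk}, which already tells us $fib^k\!\left(\tfrac11\right)=\tfrac{F_{k+1}}{F_k}$, so the statement becomes the classical limit $\lim_{k\to\infty}\tfrac{F_{k+1}}{F_k}=\varphi$. Writing $x_k:=\tfrac{F_{k+1}}{F_k}$, I would first extract the one–step recursion directly from the action of $fib=\sigma\tau$ (equivalently, from $F_{k+2}=F_{k+1}+F_k$):
\[
x_{k+1}=\frac{F_{k+2}}{F_{k+1}}=\frac{F_{k+1}+F_k}{F_{k+1}}=1+\frac{1}{x_k},\qquad x_1=2 .
\]
Since plainly $x_k\ge 1>0$ for all $k$, any limit $L$ of $(x_k)$ must satisfy $L=1+\tfrac1L$, i.e. $L^2-L-1=0$, whose only positive root is $\varphi=\tfrac{1+\sqrt5}{2}$. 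Thus the value of the limit is forced once convergence is known.

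Next I would establish that the limit exists. Subtracting the fixed–point identity $\varphi=1+\tfrac1\varphi$ from the recursion gives
\[
x_{k+1}-\varphi=\frac{1}{x_k}-\frac{1}{\varphi}=-\,\frac{x_k-\varphi}{x_k\,\varphi}.
\]
A short induction shows $x_k\ge\tfrac32$ for every $k\ge 2$ (because $x_2=\tfrac32$ and $x\mapsto 1+\tfrac1x$ maps $[\tfrac32,2]$ into itself), hence $x_k\,\varphi\ge\tfrac32\varphi>2$ and therefore $|x_{k+1}-\varphi|\le\tfrac12\,|x_k-\varphi|$ for all $k\ge 2$. Iterating, $|x_k-\varphi|$ decays geometrically to $0$, which proves \eqref{eq:goldenration}. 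As an alternative route I would instead insert Binet's formula $F_k=\tfrac{1}{\sqrt5}\bigl(\varphi^{\,k+1}-\psi^{\,k+1}\bigr)$ with $\psi=\tfrac{1-\sqrt5}{2}$ (this normalization matches $F_0=F_1=1$) and compute $\tfrac{F_{k+1}}{F_k}=\tfrac{\varphi^{\,k+2}-\psi^{\,k+2}}{\varphi^{\,k+1}-\psi^{\,k+1}}\to\varphi$, using $|\psi|<1<\varphi$.

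The hard part is essentially nonexistent here: the algebraic value $\varphi$ is pinned down the instant one knows the sequence converges while staying positive, so the only genuine content is the contraction estimate (or, equivalently, the monotone–subsequence argument, or the appeal to Binet) guaranteeing convergence. I do not expect any real obstacle beyond carrying out that elementary bound cleanly.
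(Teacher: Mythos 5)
Your proposal is correct and follows the same route as the paper: both reduce the statement via Theorem \ref{fibk} to the classical limit $\lim_{k\to\infty} F_{k+1}/F_k=\varphi$. The only difference is that the paper simply cites this known limit, whereas you supply a self-contained proof of it (via the contraction estimate for $x_{k+1}=1+\tfrac1{x_k}$, or alternatively Binet's formula, both of which are carried out correctly for the normalization $F_0=F_1=1$).
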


\vspace*{0.5cm}
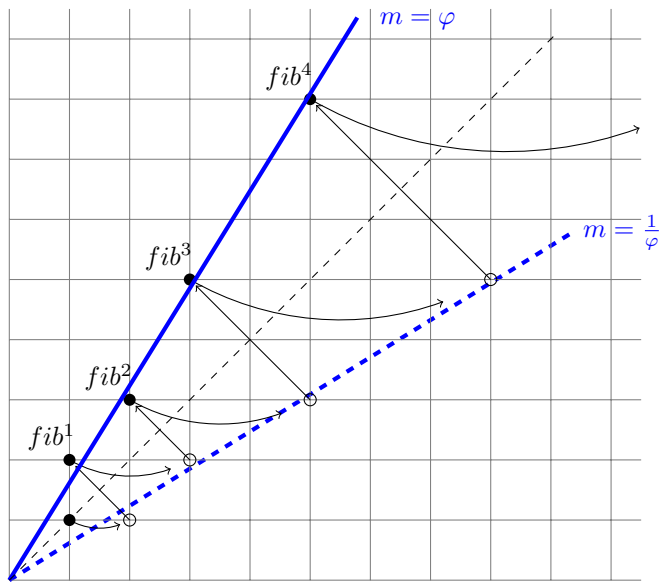
\begin{figure}[htb]
\centering
\begin{tikzpicture}[scale=0.8]
\draw[ultra thick,blue,dashed] (0,0)--(31.717:11) node[right] {\textbf{$m=\frac{1}{\varphi}$}};

\draw[gray] (0,0) grid (10.5,9.5);
\fill[black] (1,1) circle (0.1);
\draw (2,1) circle (0.1);
\draw[->] (1,1) arc (240:290:1);
\draw[->] (2,1)--(1.1,1.9);
\node[above] at (0.7,2) {$fib^1$};

\fill[black] (1,2) circle (0.1);
\draw (3,2) circle (0.1);
\draw[->] (1,2) arc (240:290:2);
\draw[->] (3,2)--(2.1,2.9);
\node[above] at (1.65,3) {$fib^2$};

\fill[black] (2,3) circle (0.1);
\draw (5,3) circle (0.1);
\draw[->] (2,3) arc (240:290:3);
\draw[->] (5,3)--(3.1,4.9);
\node[above] at (2.65,5) {$fib^3$};

\fill[black] (3,5) circle (0.1);
\draw (8,5) circle (0.1);
\draw[->] (3,5) arc (240:290:5);
\draw[->] (8,5)--(5.1,7.9);
\node[above] at (4.65,8) {$fib^4$};

\fill[black] (5,8) circle (0.1);
\draw[->] (5,8) arc (240:290:6.5);

\draw[blue,ultra thick] (0,0)--(58.2825:11);
\node[right,blue] at (6,9.3) {\textbf{$m=\varphi$}};

\draw[dashed] (0,0)--(9.1,9.1);
\end{tikzpicture}
\caption{Construction of the straight line with slope $m=\varphi$ by action of $fib^k$ on $\frac{1}{1}$. In addition, the line with slope $m=\frac{1}{\varphi}$ is shown. \label{fig:constructiongoldenratio}}
\end{figure}

The proof is clear by theorem \ref{fibk} and the general known limit for the quotient of Fibonacci numbers \cite{goldenration}. The advantage of \eqref{eq:goldenration} is that it shows us a new geometric point of view on how to construct the golden ration, where $\varphi$ is realized as the slope of the straight line through the origin (see for illustration figure \ref{fig:constructiongoldenratio}). Furthermore, if one considers the distribution of the $gcd$ of two numbers with respect to their number of steps performed in the Euclidean algorithm (see figure \ref{fig:distribution}) point which maximize the steps performed accumulate to the slope $m=\varphi$.

\begin{figure}[htb]
\centering
\includegraphics[scale=0.2]{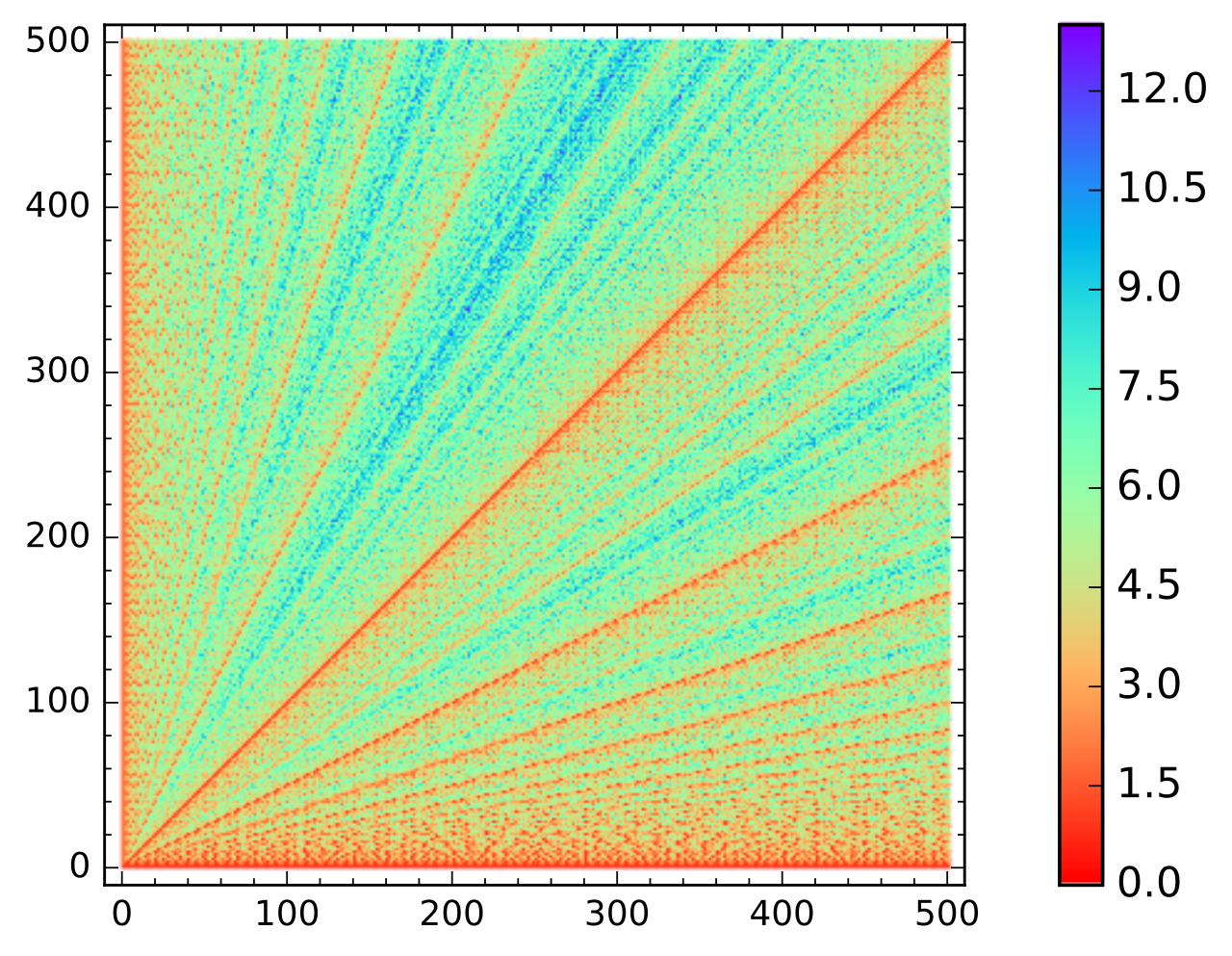}
\caption{Number of steps in the Euclidean algorithm for $gcd(p,q)$. Red points correspond to words with $N_\sigma=1$. Yellow and orange points correspond to small variations of the words which are represented by red points, whereas violet and blue points indicate more steps, i.e. $N_\sigma \rightarrow N_\tau$. The largest blue area follows the line $q=\varphi \cdot p$. The graphic is depicted from \cite{Wikipedia}.\label{fig:distribution}}
\end{figure}

Since we  have already seen that in case of the word $fib^k$ the number of $\sigma$'s is maximal for a given length of the word and that $N_\sigma(n)$ is in direct connection to the steps performed in the Euclidean algorithm, it is clear that points near the line $y=\varphi x$ correspond to small variations of the word $fib^k$.

It is interesting that in the context of billiard each word $fib^k$ corresponds to an allowed path for the billiard ball, whereas the limit 

\begin{equation}
\lim_{k\rightarrow \infty} fib^k \left(\frac{1}{1}\right)=\varphi
\end{equation}

represents an infinite trajectory of the billiard ball, see for a proof e.g. \cite{Tabachnikov}.

\section{Word lengths and Fibonacci numbers}

In the previous section we have introduced the maps $\sigma$ and $\tau$ and have shown that every allowed slope for the billiard ball trajectory is leading to another corner of the square can uniquely be obtained by acting a word of the form \eqref{eq:word} on the diagonal slope $m=\frac{1}{1}$. Now, we want to highlight another connection of words \eqref{eq:word} to Fibonacci numbers $F_n$, with $n\in \mathbb{N}_0$.

For this we consider all possible ways to build words of fixed length $n$, i.e.  we count how many times the maps $\tau$ and $\sigma$ appear within the word and how many ways exist to permute them in order to yield different words corresponding to distinct slopes or numbers fulfilling $gcd(q,p)=1$. For illustration the first possible words of length $n\leq 4$ are shown in a tree chart in figure \ref{fig:Baumdiagramm}.

\begin{figure}[htb]
\centering
\includegraphics[scale=0.5]{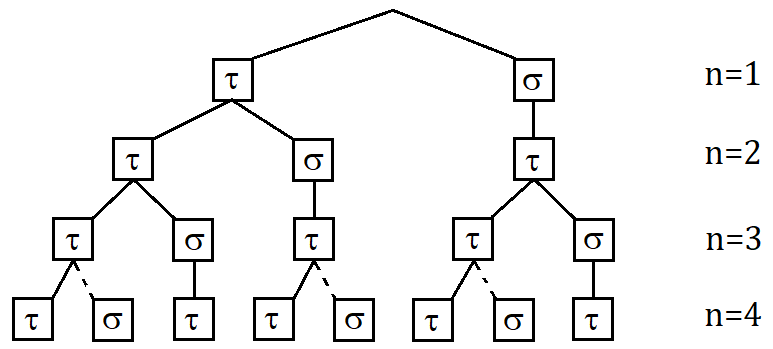}
\caption{The tree chart represents all possibilities for words with  $n\leq 4$. Words ending on $\sigma$ do not contribute and are pictured as dashed lines. \label{fig:Baumdiagramm}}
\end{figure}

For $n=1$ we see that we obtain one different word\footnote{note that $\sigma$ is not counted as distinct word since the action of $\sigma$ on $m=\frac{1}{1}$ is the identity map and is not leading to a new slope. In other words, we have defined the words \eqref{eq:word} not to start with the action of $\sigma$.}, for $n=2$ two, for $n=3$ three and for $n=4$ five. This is nothing else than the Fibonacci sequence.

Before proofing this statement, we start with a simpler result.

\begin{lemma}{\label{lemma:Fib1}}
The number of ways to build any word of length $n$ including $\sigma$ and $\tau$, assuming that $\sigma$ also can stand in first position, is given by $F_{n+1}$.
\end{lemma}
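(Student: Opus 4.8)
The plan is to characterize the admissible words combinatorially and then show their count satisfies the Fibonacci recursion. Recall from \eqref{eq:word} that a word is a string in the alphabet $\{\sigma,\tau\}$ subject to the single structural restriction that $\sigma$ never appears twice in a row (the general form $\sigma^j\tau^{n_1}\sigma\tau^{n_2}\cdots\sigma\tau^{n_k}$ with $n_i\geq 1$ for $i\geq 2$ and $j\in\{0,1\}$ is precisely the set of binary strings with no two consecutive $\sigma$'s). So, letting $A_n$ denote the number of such strings of length $n$ \emph{allowing} $\sigma$ in first position, the claim is $A_n = F_{n+1}$ with the paper's indexing $F_0=F_1=1$.

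First I would set up the recursion by conditioning on the last letter of the word. If the word of length $n$ ends in $\tau$, the preceding $n-1$ letters form an arbitrary admissible word of length $n-1$, contributing $A_{n-1}$. If it ends in $\sigma$, then admissibility forces the $(n-1)$-th letter to be $\tau$ (no two consecutive $\sigma$'s), and the first $n-2$ letters form an arbitrary admissible word of length $n-2$, contributing $A_{n-2}$. Since these cases are disjoint and exhaustive, we get
\begin{equation}
A_n = A_{n-1} + A_{n-2}.
\end{equation}
Then I would nail down the base cases directly from the tree chart in figure \ref{fig:Baumdiagramm}: for $n=1$ the admissible words are $\sigma$ and $\tau$, so $A_1 = 2 = F_2$; for $n=2$ they are $\sigma\tau,\tau\sigma,\tau\tau$ (but not $\sigma\sigma$), so $A_2 = 3 = F_3$. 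One could equivalently start from $A_0 = 1 = F_1$ (the empty word) and $A_1 = 2 = F_2$. A straightforward induction using the recursion then yields $A_n = F_{n+1}$ for all $n$.

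The main point requiring care — and the only real obstacle — is the bookkeeping of what exactly is being counted: one must be explicit that "building any word including $\sigma$ and $\tau$" means binary strings over $\{\sigma,\tau\}$ with the no-two-consecutive-$\sigma$ constraint inherited from \eqref{eq:word}, and that here, unlike the words of the main theorem to follow, a leading $\sigma$ \emph{is} permitted and counted. Once that convention is pinned down, the recursion argument is routine. I would therefore keep the proof short: state the bijection with constrained binary strings, derive the last-letter recursion, check the two base cases against figure \ref{fig:Baumdiagramm}, and invoke induction.
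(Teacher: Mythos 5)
Your proof is correct and follows essentially the same route as the paper's: both identify the admissible words as $\{\sigma,\tau\}$-strings with no two consecutive $\sigma$'s and condition on the final letter to obtain the Fibonacci recursion. The only cosmetic difference is that the paper tracks the two subsequences $M_\sigma(n)$ and $M_\tau(n)$ separately and sums them, whereas you collapse this into the single recursion $A_n=A_{n-1}+A_{n-2}$ directly (and, usefully, you state both base cases explicitly, which the two-step recursion actually requires).
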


We can proof this by total induction.

\begin{proof}
Let $M_\sigma(n)$ denote the number of words ending on a $\sigma$ and as well for $M_\tau(n)$. For $n=1$ we have $M_\sigma(1)=M_\tau(1)=1$ and thus for the total number of different words $M_\sigma(1)+M_\tau(1)=2=F_2$. Performing the step $n\rightarrow n+1$ we have
\begin{align*}
M_\sigma(n+1)+M_\tau(n+1)&=\overbrace{M_\tau(n)}^{=M_\sigma(n+1)}+\underbrace{M_\sigma(n)+M_\tau(n)}_{=M_\tau(n+1)}\\
&=M_\tau(n)+F_{n+1}\\
&=M_\sigma(n-1)+M_\tau(n-1)+F_{n+1}\\
&=F_n+F_{n+1}=F_{n+2}.
\end{align*}

Here we have used the fact that every word ending on a $\tau$ can  give words ending on $\tau$ or $\sigma$in the next step, whereas words ending on $\sigma$ only yield words ending on $\tau$ in the following step since $\sigma^2=id$. 
\end{proof}

Using the result of lemma \ref{lemma:Fib1} we can show the form \eqref{eq:word} of our words:

\begin{theorem}
The number of different words of length $n$ is given by the Fibonacci number $F_{n}$, where $F_0=F_1=1$. 
\end{theorem}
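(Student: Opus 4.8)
The plan is to reduce the count of genuine words of length $n$ (those \emph{not} starting with $\sigma$, since such an initial $\sigma$ is forbidden in the normal form \eqref{eq:word}) to the count already obtained in Lemma~\ref{lemma:Fib1}. The key observation is a bijective pairing: the set of all strings of length $n$ over $\{\sigma,\tau\}$ splits into those beginning with $\tau$ and those beginning with $\sigma$, and deleting the leading $\sigma$ gives a bijection between strings of length $n$ beginning with $\sigma$ and \emph{all} strings of length $n-1$. Hence, if $W(n)$ denotes the number of admissible words (not beginning with $\sigma$) of length $n$, and $M(n) := M_\sigma(n)+M_\tau(n) = F_{n+1}$ is the total from Lemma~\ref{lemma:Fib1}, we get $M(n) = W(n) + M(n-1)$, i.e. $W(n) = F_{n+1} - F_n = F_{n-1}$.

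First I would set up the base cases explicitly to fix the indexing convention ($F_0 = F_1 = 1$), checking against the tree chart in figure~\ref{fig:Baumdiagramm}: for $n=1$ the only admissible word is $\tau$, so $W(1)=1=F_0$; for $n=2$ the admissible words are $\tau\tau$ and $\tau\sigma$, giving $W(2)=2=F_1$. Wait --- this shows the theorem as stated (with $F_n$) needs its index read off carefully; the count $1,2,3,5,\dots$ for $n=1,2,3,4$ is $F_1,F_2,F_3,F_4$ in a shifted convention, so I would reconcile this by noting that with the stated normalization $F_0=F_1=1$ the sequence of admissible-word counts is $F_n$ precisely when one identifies the $n=1$ case with $F_1=1$, the $n=2$ case with $F_2 = 2$ (using $F_2 = F_1 + F_0 = 2$), and so on. The cleanest route is therefore to establish the recursion $W(n) = M(n) - M(n-1)$ directly and then invoke $M(n) = F_{n+1}$ from Lemma~\ref{lemma:Fib1}.

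Alternatively --- and this is probably the more transparent argument to present --- I would prove the recursion $W(n) = W(n-1) + W(n-2)$ directly by the same last-letter bookkeeping used in Lemma~\ref{lemma:Fib1}, restricted to admissible words. Let $W_\tau(n)$ and $W_\sigma(n)$ count admissible words of length $n$ ending in $\tau$ and $\sigma$ respectively. Every admissible word ending in $\tau$ extends to one ending in $\tau$ or $\sigma$; every admissible word ending in $\sigma$ extends only to one ending in $\tau$, since $\sigma^2 = \mathrm{id}$ collapses $\sigma\sigma$. The only subtlety versus Lemma~\ref{lemma:Fib1} is the first letter: the shortest admissible word is $\tau$, not $\sigma$, which simply shifts the base case. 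This yields $W_\tau(n+1) = W_\tau(n) + W_\sigma(n) = W(n)$ and $W_\sigma(n+1) = W_\tau(n)$, whence $W(n+1) = W(n) + W_\tau(n) = W(n) + W(n-1)$, the Fibonacci recursion.

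The main obstacle is purely the indexing: one must be careful that the normal form \eqref{eq:word} excludes a leading $\sigma$ (so a word and its $\sigma$-prefixed partner are not both counted) and that the convention $F_0 = F_1 = 1$ is threaded consistently through the base cases. There is no combinatorial difficulty beyond the already-established last-letter recursion; the content is entirely in matching up the shifted Fibonacci index, and I would devote the bulk of the written proof to stating the base cases $W(1) = 1 = F_1$, $W(2) = 2 = F_2$ unambiguously and then citing the recursion.
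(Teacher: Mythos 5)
Your second argument is the one to keep: it is correct and is essentially the paper's own proof. The paper simply reads off $M_\tau(n)=M_\sigma(n-1)+M_\tau(n-1)=F_n$ from the recursion already established in Lemma~\ref{lemma:Fib1}, whereas you rederive the same last-letter recursion restricted to admissible words, obtaining $W(n+1)=W(n)+W(n-1)$ with $W(1)=1$, $W(2)=2$, hence $W(n)=F_n$ under the convention $F_0=F_1=1$. That settles the theorem.

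Your first argument, however, contains a genuine error, and the ``cleanest route'' you propose at the end of the second paragraph inherits it. The strings counted by Lemma~\ref{lemma:Fib1} are those with no two consecutive $\sigma$'s (because $\sigma^2=\mathrm{id}$). Deleting the leading $\sigma$ from such a string of length $n$ yields a string of length $n-1$ whose first letter is forced to be $\tau$; the image of your deletion map is therefore the set of \emph{admissible} strings of length $n-1$, not all of them. The correct decomposition is $M(n)=W(n)+W(n-1)$, not $M(n)=W(n)+M(n-1)$, so $W(n)=F_{n+1}-F_n=F_{n-1}$ is off by one. Your own base case $W(2)=2\neq F_1=1$ already exposes this; it is not an indexing artefact to be ``read off carefully'' but a wrong bijection, and no shift of convention repairs it.

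One smaller correction: the normal form \eqref{eq:word} \emph{does} permit a leading $\sigma$ (the factor $\sigma^j$ with $j=1$ when $q>p$); what is excluded is a trailing $\sigma$, i.e.\ the first map applied to $\frac{1}{1}$, since $\sigma\left(\frac{1}{1}\right)=\frac{1}{1}$. Thus the two words of length $2$ are $\tau\tau$ and $\sigma\tau$, not $\tau\tau$ and $\tau\sigma$. String reversal preserves the no-$\sigma\sigma$ condition and exchanges the two ends, so your count is unaffected, but the set you describe is not the set of normal forms, and the discrepancy would matter if one wanted the bijection with slopes rather than just the cardinality.
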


\begin{proof}
As a direct consequence of lemma \eqref{lemma:Fib1} we have
\begin{align}
M_\sigma(n)&=F_{n-1}\\
M_\tau(n)&=F_n.
\end{align}

The number of words of length $n$, as defined in equation \eqref{eq:word}, is then given by $M_\tau(n)=F_n$. For a pictorial visualization see figure \ref{fig:Baumdiagramm2}.

\begin{figure}[htb]
\centering
\includegraphics[scale=0.32]{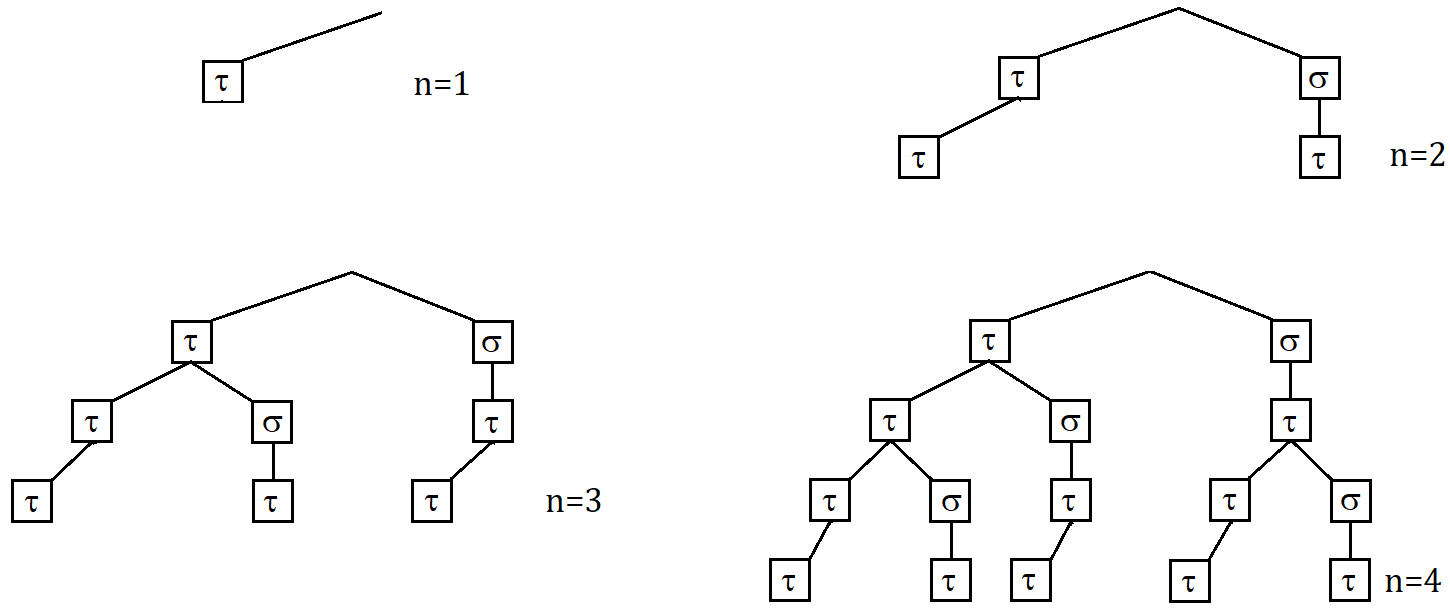}
\caption{Paths to build words of length $n\leq 4$. \label{fig:Baumdiagramm2}}
\end{figure}

\end{proof}

For a fixed $n$ the distribution of points that can be obtained by the words of length $n$, is shown in figure \ref{fig:distribution2}. From the graphic, again, it is interesting to see the relation to the Fibonacci numbers which are connected by mirroring points along the diagonal. In particular, by considering points only that lie below the diagonal we again identify the Fibonacci sequence for fixed $n$.

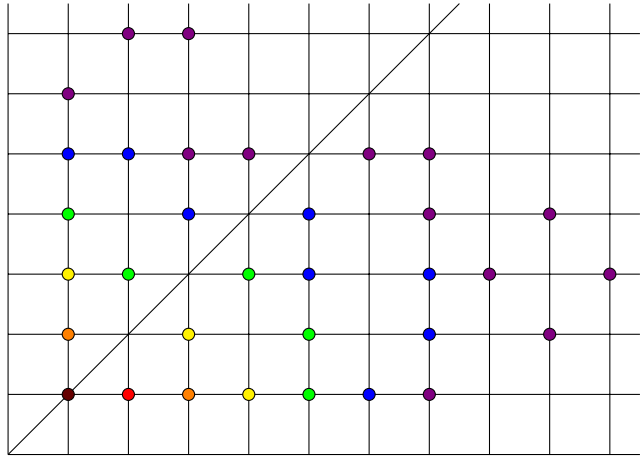
\begin{figure}[htb]
\centering
\begin{tikzpicture}[scale=0.8]
\draw (0,0) grid (10.5,7.5);

\draw(0,0)--(7.5,7.5);

\fill[red!40!black,draw=black] (1,1) circle (0.1); 

\fill[red,draw=black] (2,1) circle (0.1); 

\fill[orange,draw=black] (3,1) circle (0.1); 
\fill[orange,draw=black] (1,2) circle (0.1); 

\fill[yellow,draw=black] (4,1) circle (0.1); 
\fill[yellow,draw=black] (1,3) circle (0.1); 
\fill[yellow,draw=black] (3,2) circle (0.1); 

\fill[green,draw=black] (5,1) circle (0.1); 
\fill[green,draw=black] (5,2) circle (0.1); 
\fill[green,draw=black] (4,3) circle (0.1); 
\fill[green,draw=black] (2,3) circle (0.1); 
\fill[green,draw=black] (1,4) circle (0.1); 

\fill[blue,draw=black] (6,1) circle (0.1); 
\fill[blue,draw=black] (7,2) circle (0.1); 
\fill[blue,draw=black] (5,3) circle (0.1); 
\fill[blue,draw=black] (7,3) circle (0.1); 
\fill[blue,draw=black] (5,4) circle (0.1); 
\fill[blue,draw=black] (1,5) circle (0.1); 
\fill[blue,draw=black] (2,5) circle (0.1); 
\fill[blue,draw=black] (3,4) circle (0.1); 

\fill[violet,draw=black] (7,1) circle (0.1); 
\fill[violet,draw=black] (9,2) circle (0.1); 
\fill[violet,draw=black] (10,3) circle (0.1); 
\fill[violet,draw=black] (9,4) circle (0.1); 
\fill[violet,draw=black] (8,3) circle (0.1); 
\fill[violet,draw=black] (6,5) circle (0.1); 
\fill[violet,draw=black] (7,5) circle (0.1); 
\fill[violet,draw=black] (7,4) circle (0.1); 
\fill[violet,draw=black] (1,6) circle (0.1); 
\fill[violet,draw=black] (2,7) circle (0.1); 
\fill[violet,draw=black] (3,7) circle (0.1); 
\fill[violet,draw=black] (4,5) circle (0.1); 
\fill[violet,draw=black] (3,5) circle (0.1); 




\end{tikzpicture}

\caption{Distribution of all endpoints for words of fixed length. Red equals $n=1$, orange $n=2$, yellow $n=3$, green $n=4$, blue $n=5$ and violet $n=6$. \label{fig:distribution2}}
\end{figure}

At this point it is unclear whether there is some deeper relation to the distribution or if it can be used in some other context. Nevertheless, it is interesting that the Fibonacci sequence seems to be a manifest feature of the $gcd$.

\section{Conclusion and Outlook}\label{sec:conclusion}
By defining the two maps $\tau$ and $\sigma$ related to properties of the $gcd$ we have shown that the Fibonacci sequence is naturally contained in various aspects of words that can be built by the two maps. We have been able to rederive Lam\'{e}'s theorem and also to  equip it with a geometrical interpretation in terms of trajectories for a billiard ball rolling, by possible multiple reflections, from one corner to another. In addition, we proved that the number of words of length $n$ is also connected to the Fibonacci sequence, though the profit one possibly can gain from this fact is up to this point not clear. For future works it would be interesting to see whether the Fibonacci sequence is even further embedded in the framework of mathematical billiards, e.g. if it also appears in the case of a triangular (or more generally polygonal) tables or if the distribution of words of fixed length $n$ can be used to estimate the number of trajectories of length less or equal to some fixed number $L$. 



\appendix


\vspace*{3cm}
\begin{center}
\textit{A special thanks to Agnes Zauner for her enlightening humour.}
\end{center}

\end{document}